\newtheorem{thm}{Theorem}[section]
\newtheorem{cor}[thm]{Corollary}
\newtheorem{lem}[thm]{Lemma}
\newtheorem{prop}[thm]{Proposition}
\theoremstyle{definition}
\numberwithin{equation}{section}
\begin{document}

\title{Asymmetric truncated Toeplitz operators equal to the zero operator}
\author[Joanna Jurasik]{Joanna Jurasik}%\\
\author[Bartosz \L anucha]{Bartosz \L anucha}%\\
%Department of Mathematics, Maria Curie-Sklodowska University\\
%20-031 Lublin, Poland\\
%E-mail: bartosz.lanucha@gmail.com}
\address{Department of Mathematics, Maria Curie-Sk\l odowska University, Maria Curie-Sk\l odowska Square 1, 20-031 Lublin, Poland}%
\email{bartosz.lanucha@poczta.umcs.lublin.pl}%
\address{Department of Mathematics, Maria Curie-Sk\l odowska University, Maria Curie-Sk\l odowska Square 1, 20-031 Lublin, Poland}%
\email{asia.blicharz@op.pl}%
\begin{abstract}
Asymmetric truncated Toeplitz operators are compressions of multiplication operators acting between two model spaces. These operators are natural generalizations of truncated Toeplitz operators. In this paper we describe symbols of asymmetric truncated Toeplitz operators equal to the zero operator.
\end{abstract}

\maketitle

\renewcommand{\thefootnote}{}

\footnote{2010 \emph{Mathematics Subject Classification}: 47B32,
47B35, 30H10.}

\footnote{\emph{Key words and phrases}: model spaces, truncated
Toeplitz operators, asymmetric truncated
Toeplitz operators.}

\renewcommand{\thefootnote}{\arabic{footnote}}
\setcounter{footnote}{0}

\section{Introduction}

Let $H^2$ denote the Hardy space of the unit disk
$\mathbb{D}=\{z\colon|z|<1\}$, that is, the space of functions analytic in $\mathbb{D}$ with square summable Maclaurin coefficients.

Using the boundary values, one can identify $H^2$ with a closed
subspace of $L^2(\partial\mathbb{D})$, the subspace of
functions whose Fourier coefficients with negative indices vanish.
The orthogonal projection $P$ from $L^2(\partial\mathbb{D})$ onto $H^2$, called the Szeg\"{o} projection, is given by
$$Pf(z)=\frac{1}{2\pi}\int_0^{2\pi}\frac{f(e^{it})dt}{1-e^{-it}z},\quad f\in L^2(\partial\mathbb{D}).$$
Note that if $f\in L^1(\partial\mathbb{D})$, then the above integral still defines a function $Pf$ analytic in $\mathbb{D}$.

%As an integral operator, $P$ can be extended
%to $L^1(\partial\mathbb{D})$.

The classical Toeplitz operator $T_{\varphi}$ with symbol
$\varphi\in L^2(\partial\mathbb{D})$ is defined on $H^2$ by
$$T_{\varphi}f=P(\varphi f).$$%,\quad f\in  H^2.$$
It is known that $T_{\varphi}$ is bounded if and only if $\varphi\in
L^{\infty}(\partial\mathbb{D})$. The operator $S=T_z$ is called the unilateral shift and its adjoint $S^{*}=T_{\overline{z}}$ is called the backward shift. We have $Sf(z)=zf(z)$ and
$$S^{*}f(z)=\frac{f(z)-f(0)}{z}.$$

Let $H^{\infty}$ be the algebra of bounded analytic functions on
$\mathbb{D}$ and let $\alpha\in H^{\infty}$ be an arbitrary inner function, that
is, $|\alpha|=1$ a.e. on $\partial\mathbb{D}$.

By the theorem of A. Beurling (see, for example, \cite[Thm.
8.1.1]{r}), every nontrivial, closed $S$-invariant subspace of $H^2$ can be expressed as $\alpha H^2$ for some inner function $\alpha$. Consequently, every nontrivial, closed $S^{*}$-invariant subspace of $H^2$ is of the form
$$K_{\alpha}=H^2\ominus \alpha H^2$$
with $\alpha$ inner. The space $K_{\alpha}$ is called the model space corresponding to $\alpha$.

The kernel function
\begin{equation}\label{kerker}
k_{w}^{\alpha}(z)=\frac{1-\overline{\alpha(w)}\alpha(z)}{1-\overline{w}z},\quad w, z\in\mathbb{D},
\end{equation}
is a reproducing kernel for the model space $K_{\alpha}$, i.e., for each
$f\in K_{\alpha}$ and $w\in\mathbb{D}$,
$$f(w)=\langle f, k_{w}^{\alpha}\rangle$$
($\langle
\cdot,\cdot\rangle$ being the usual integral inner product). Observe that
$k_{w}^{\alpha}$ is a bounded function for every $w\in\mathbb{D}$. It follows that the subspace
$K_{\alpha}^{\infty}=K_{\alpha}\cap H^{\infty}$ is dense in $K_{\alpha}$. If $\alpha(w)=0$, then $k_{w}^{\alpha}=k_{w}$, where $k_w$ is the Szeg\"{o} kernel given by $k_w(z)=(1-\overline{w}z)^{-1}$.

The function $\alpha$ is said to have a nontangential limit at $\eta\in\partial\mathbb{D}$ if there exists $\alpha(\eta)$ such that $\alpha(z)$ tends to $\alpha(\eta)$ as $z\in\mathbb{D}$ tends to $\eta$ nontangentially (with $|z-\eta|\leq C(1-|z|)$ for some fixed $C>0$). We say that $\alpha$ has an angular derivative in the sense of Carath\'{e}odory (an ADC) at $\eta\in\partial\mathbb{D}$ if both $\alpha$ and $\alpha'$ have nontangential limits at $\eta$ and $|\alpha(\eta)|=1$ (for more details see \cite[pp. 33--37]{bros}). P. R. Ahern and D. N. Clark proved in \cite{clark,cclark}, that $\alpha$ has an ADC at $\eta\in\partial\mathbb{D}$ if and only if every $f\in K_{\alpha}$ has a nontangential limit $f(\eta)$ at $\eta$. If $\alpha$ has an ADC at $\eta$ and $w$ tends to $\eta$ nontangentially, then the reproducing kernels $k_w^{\alpha}$ tend in norm to the function $k_{\eta}^{\alpha}\in K_{\alpha}$ given by \eqref{kerker} with $\eta$ in place of $w$. Moreover, $f(\eta)=\langle f, k_{\eta}^{\alpha}\rangle$ for all $f\in K_{\alpha}$.

Let $P_{\alpha}$ denote the orthogonal projection
 from $L^2(\partial\mathbb{D})$ onto $K_{\alpha}$. Then
  $$P_{\alpha}f(z)=\langle f,k_{z}^{\alpha}\rangle,\quad f\in
L^2(\partial\mathbb{D}),\ z\in\mathbb{D}.$$
%=\frac{1}{2\pi}\int_0^{2\pi}f(e^{it})\frac{1-\alpha(z)\overline{\alpha(e^{it})}}{1-ze^{-it}}dt
Just like with the Szeg\"{o} projection,  $P_{\alpha}f$ is a function analytic in $\mathbb{D}$ for all $f\in L^1(\partial\mathbb{D})$.

%The model space $K_{\alpha}$ is equipped with a natural conjugation
%(antilinear, isometric involution) $C_{\alpha}\colon
%K_{\alpha}\rightarrow K_{\alpha}$, defined in terms of the boundary
%values by
%\begin{equation}\label{numerek3}
%C_{\alpha}f(z)=\alpha(z)\overline{z}\overline{f(z)},\quad |z|=1
%\end{equation}
%(see \cite{s} Section 2.3, for details). Actually, it can be
%verified that \eqref{numerek3} defines an antilinear and isometric
%involution on $L^2(\partial\mathbb{D})$, which maps $\alpha H^2$
%onto $\overline{H_0^2}$, $\overline{H_0^2}$ onto $\alpha H^2$, and
%preserves $K_{\alpha}$. For $f\in K_{\alpha}$ we will write $\widetilde{f}$ in place of
%$C_{\alpha}f$ when no confusion can arise. A short calculation shows that the conjugate kernel
%$\widetilde{k}_{w}^{\alpha}$ is given by
%$$\widetilde{k}_{w}^{\alpha}(z)=\frac{\alpha(z)-\alpha(w)}{z-w}.$$
%If $\eta\in\partial\mathbb{D}$ and $k_{\eta}^{\alpha}\in K_{\alpha}$, then
%$$\widetilde{k}_{\eta}^{\alpha}(z)=\frac{\alpha(z)-\alpha(\eta)}{z-\eta}=\alpha(\eta)\overline{\eta}k_{\eta}^{\alpha}(z).$$

A truncated Toeplitz operator with a symbol $\varphi\in
L^2(\partial\mathbb{D})$ is the operator $A_{\varphi}^{\alpha}$ defined on the model space $K_{\alpha}$ by
$$A_{\varphi}^{\alpha}f=P_{\alpha}(\varphi f).$$%,\quad f\in K_{\alpha}.$$
Densely defined on bounded functions, the operator $A_{\varphi}^{\alpha}$ can be seen as a compression to
$K_{\alpha}$ of the classical Toeplitz operator $T_{\varphi}$ on
$H^2$.

%However, despite similar definitions, truncated Toeplitz
%operators differ form the classical ones in many ways. For example, $T_{\varphi}=0$ if and only if $\varphi=0$, but $A_{\varphi}^{\alpha}=0$ if and only if %$\varphi\in \overline{\alpha H^2}+\alpha H^2$ (see \cite{s}, Thm. 3.1). Moreover, unbounded symbols can produce bounded truncated Toeplitz operators
%and there are bounded truncated Toeplitz operators for which no
%bounded symbol exists (for details, see \cite{bar}).

The study of truncated Toeplitz operators as a class began in 2007
with D. Sarason's paper \cite{s}. In spite of similar definitions, there are many differences between truncated Toeplitz operators and the classical ones. One of the first results from \cite{s} states that, unlike in the classical case, a truncated Toeplitz operator is not uniquely determined by its symbol. More precisely, $A_{\varphi}^{\alpha} = 0$ if and only if $\varphi\in\overline{\alpha H^2}+\alpha H^2$ (\cite[Thm. 3.1]{s}). As a consequence, unbounded symbols can produce bounded truncated Toeplitz operators. Moreover, there exist bounded truncated Toeplitz operators for which no bounded symbols exist (see \cite{bar}). For more interesting results see \cite{si,bros,gar,gar2,gar3}.

% The set of all bounded truncated Toeplitz operators on $K_u$
%will be denoted by $\mathscr{T}(K_u)$. Truncated Toeplitz operators
%have been studied in, for example \cite{bar}, \cite{si},
%\cite{gp}-\cite{gar3}, \cite{s}.

%It is known that every $A\in\mathscr{T}(K_u)$ is
%$C$-symmetric, that is,
%\begin{equation}\label{dens}
%CA=A^{*}C.
%\end{equation}

Recently, the authors in \cite{ptak} and \cite{part} introduced a generalization of truncated Toeplitz operators, the so-called asymmetric truncated
Toeplitz operators. Let $\alpha$, $\beta$ be two inner functions and let $\varphi\in L^2(\partial\mathbb{D})$. An asymmetric
truncated Toeplitz operator $A_{\varphi}^{\alpha,\beta}$ is the
operator from $K_{\alpha}$ into $K_{\beta}$ given by
$$A_{\varphi}^{\alpha,\beta}f=P_{\beta}(\varphi f),\quad f\in K_{\alpha}.$$
The operator $A_{\varphi}^{\alpha,\beta}$ is densely defined. Clearly, $A_{\varphi}^{\alpha,\alpha}=A_{\varphi}^{\alpha}$.

We denote
$$\mathcal{T}(\alpha,\beta)=\{A_{\varphi}^{\alpha,\beta}\ \colon\ \varphi\in
L^2(\partial\mathbb{D})\ \mathrm{and}\ A_{\varphi}^{\alpha,\beta}\
\mathrm{is\ bounded}\}$$
and $\mathcal{T}(\alpha)=\mathcal{T}(\alpha,\alpha)$.

The purpose of this paper is to describe when an operator from $\mathcal{T}(\alpha,\beta)$ is equal to the zero operator. The description is given in terms of the symbol of the operator. This was done in \cite{ptak} and \cite{part} for the case when $\beta$ divides $\alpha$, that is, when $\alpha/\beta$ is an inner function. It was proved in \cite{ptak} and \cite{part} that $A_{\varphi}^{\alpha,\beta}=0$ if and only if $\varphi\in \overline{\alpha H^2}+\beta H^2$. Here we show that this is true for all inner functions $\alpha$ and $\beta$. We also give some examples of rank-one asymmetric truncated Toeplitz operators.

\section{Main result}

In this section we prove the following.

\begin{thm}\label{zero}
Let $\alpha,\beta$ be two nonconstant inner functions and let
$A^{\alpha, \beta}_{\varphi}:K_{\alpha}\to K_{\beta}$ be a bounded asymmetric truncated Toeplitz operator with $\varphi\in L^2(\partial\mathbb{D})$. Then $A^{\alpha, \beta}_{\varphi}=0$ if and only if $\varphi\in \overline{\alpha H^2}+\beta H^2$.
\end{thm}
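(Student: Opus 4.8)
The plan is to treat the two implications separately, throughout writing $\varphi=\overline{a}+b$ for the unique decomposition with $b=P\varphi\in H^2$ and $\overline{a}=(I-P)\varphi$, so that $a\in zH^2$ (in particular $a(0)=0$). For the ``if'' direction, suppose $\varphi=\overline{\alpha p}+\beta q$ with $p,q\in H^2$. For bounded $f\in K_\alpha$ I would split $\varphi f=\overline{\alpha p}\,f+\beta q f$. The term $\beta q f$ lies in $\beta H^2$, which $P_\beta$ annihilates; for the other term, $\langle\overline{\alpha p}\,f,g\rangle=\langle f,\alpha p g\rangle=0$ for every bounded $g\in K_\beta$, because $\alpha p g\in\alpha H^2\perp K_\alpha\ni f$. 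Thus $P_\beta(\varphi f)=0$ on the dense set $K_\alpha^\infty$, and boundedness gives $A^{\alpha,\beta}_\varphi=0$.

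The substance is the ``only if'' direction, and my strategy is to produce an \emph{explicit} decomposition of $\varphi$, which has the bonus of avoiding any discussion of whether $\overline{\alpha H^2}+\beta H^2$ is closed. Assume $A^{\alpha,\beta}_\varphi=0$. For bounded $f\in K_\alpha$ the analytic part of $\varphi f$ is $T_{\overline a}f+bf$, where $T_{\overline a}f=P(\overline a f)$, and since $P_\beta(\varphi f)=0$ this function lies in $\beta H^2$; call this $(\dagger)_f$ and write $T_{\overline a}f+bf=\beta h_f$. The key step is to apply $(\dagger)$ to both $f$ and $S^*f$: because $K_\alpha$ is $S^*$-invariant we have $S^*f\in K_\alpha$, and because coanalytic Toeplitz operators commute with the backward shift, $T_{\overline z}T_{\overline a}=T_{\overline a}T_{\overline z}$, so $T_{\overline a}(S^*f)=S^*(T_{\overline a}f)$. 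Hence the left-hand side of $(\dagger)_{S^*f}$ equals
\[
z^{-1}\bigl(\beta h_f-(T_{\overline a}f)(0)-f(0)\,b\bigr),
\]
the division by $z$ being legitimate because the numerator vanishes at the origin by $(\dagger)_f$. Membership in $\beta H^2$ then forces $(T_{\overline a}f)(0)\cdot 1+f(0)\,b\in\beta H^2$, and applying $P_\beta$ together with $P_\beta 1=k_0^\beta$ yields, for every $f\in K_\alpha$,
\[
(T_{\overline a}f)(0)\,k_0^\beta+f(0)\,P_\beta b=0.\tag{$\ast$}
\]

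From $(\ast)$ the conclusion follows quickly. Taking $f=k_0^\alpha$, whose value at the origin is $1-|\alpha(0)|^2\neq 0$, shows $P_\beta b=\mu\,k_0^\beta$ for a scalar $\mu$; feeding this back into $(\ast)$ gives $(T_{\overline a}f)(0)=-\mu f(0)$ for all $f$. Since $(T_{\overline a}f)(0)=\langle f,a\rangle$ and $f(0)=\langle f,k_0^\alpha\rangle$, this reads $\langle f,a\rangle=\langle f,-\overline{\mu}\,k_0^\alpha\rangle$ for all $f\in K_\alpha$, i.e. $P_\alpha a=-\overline{\mu}\,k_0^\alpha$. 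Setting $c=\overline{\mu}$ and using $P_\alpha 1=k_0^\alpha$, $P_\beta 1=k_0^\beta$, I get $P_\alpha(a+c)=0$ and $P_\beta(b-\overline c)=0$, so $a+c=\alpha p$ and $b-\overline c=\beta q$ with $p,q\in H^2$; adding the conjugate of the first to the second gives $\varphi=\overline a+b=\overline{\alpha p}+\beta q$, exactly as required.

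I expect the main obstacle to be precisely the derivation of $(\ast)$: one must see that comparing $(\dagger)_f$ with $(\dagger)_{S^*f}$ and exploiting the commutation of $T_{\overline a}$ with $S^*$ collapses the $f$-dependent remainder $\beta h_f$ and isolates the rank-one data $P_\alpha a$, $P_\beta b$. It is this maneuver, rather than any divisibility assumption, that handles arbitrary $\alpha(0)$ and $\beta(0)$ and, by delivering an exact representation, removes the need to know in advance that $\overline{\alpha H^2}+\beta H^2$ is closed.
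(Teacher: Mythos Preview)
Your argument is correct and takes a genuinely different route from the paper. The paper first establishes the result under the extra hypothesis that both $\alpha$ and $\beta$ have a zero in $\mathbb{D}$ (by evaluating the operator on the reproducing kernels $k_{z_0}^\alpha=k_{z_0}$ and $k_{w_0}^\beta=k_{w_0}$ at those zeros), and then removes that hypothesis by conjugating with Crofoot transforms $J_a^\alpha$, $J_b^\beta$ so as to replace $\alpha,\beta$ by $\alpha_a,\beta_b$ with $\alpha_a(0)=\beta_b(0)=0$; an explicit computation then pulls the decomposition back. Your proof bypasses both steps: exploiting the $S^*$-invariance of $K_\alpha$ and the commutation $T_{\overline a}S^*=S^*T_{\overline a}$ (which you should note is valid on bounded $f$ even though $T_{\overline a}$ need not be bounded, since $(I-P)(\overline a f)\in\overline{zH^2}$ is annihilated by $P\,\overline z$), you extract the single scalar relation $(\ast)$ directly and read off $P_\alpha a$, $P_\beta b$ as multiples of $k_0^\alpha$, $k_0^\beta$. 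This is more elementary and yields the explicit decomposition $\varphi=\overline{\alpha p}+\beta q$ in one stroke, whereas the paper's approach has the advantage that the Crofoot correspondence $\mathscr{T}(\alpha,\beta)\leftrightarrow\mathscr{T}(\alpha_a,\beta_b)$ it develops along the way is of independent interest. One small point worth making explicit in your write-up: when you pass from $(\dagger)_f$ to $(\dagger)_{S^*f}$ you implicitly use that $S^*f$ is again bounded, which is immediate since $S^*f=\overline z(f-f(0))$ on $\partial\mathbb{D}$.
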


We start with a simple technical lemma.

\begin{lem}\label{le1}
Let $\alpha$, $\beta$ be two arbitrary inner functions. If
\begin{equation}\label{9}
K_{\alpha}\subset \beta H^2,
\end{equation}
then both $\alpha$ and $\beta$ have no zeros in $\mathbb{D}$, or at least one of the functions $\alpha$ or $\beta$ is a constant function.
%then either one of the functions $\alpha, \beta$ is constant, or they both have no zeros in $\mathbb{D}$.
\end{lem}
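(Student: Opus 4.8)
The plan is to exploit the reproducing kernels of $K_{\alpha}$ together with a symmetry that the hypothesis \eqref{9} secretly enjoys. First I would observe that, read inside $H^2$, we have $K_{\alpha}^{\perp}=\alpha H^2$ and $(\beta H^2)^{\perp}=K_{\beta}$, so taking orthogonal complements turns the inclusion $K_{\alpha}\subset\beta H^2$ into the reversed inclusion of complements, giving the equivalence
$$K_{\alpha}\subset\beta H^2\iff K_{\beta}\subset\alpha H^2.$$
Thus the roles of $\alpha$ and $\beta$ in \eqref{9} are interchangeable, and it suffices to analyze what a zero of one inner function forces on the other.

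Next comes the key step: suppose $\beta$ has a zero $w_1\in\mathbb{D}$ and that \eqref{9} holds. Every $f\in K_{\alpha}$ can then be written $f=\beta g$ with $g\in H^2$, so $f(w_1)=\beta(w_1)g(w_1)=0$; that is, every element of $K_{\alpha}$ vanishes at $w_1$. Applying this to the reproducing kernels $k_w^{\alpha}\in K_{\alpha}$ and using \eqref{kerker}, I get
$$0=k_w^{\alpha}(w_1)=\frac{1-\overline{\alpha(w)}\,\alpha(w_1)}{1-\overline{w}\,w_1}\qquad\text{for every } w\in\mathbb{D}.$$
Since the denominator never vanishes, this says $\overline{\alpha(w)}\,\alpha(w_1)=1$ for all $w\in\mathbb{D}$, which forces $\alpha(w)$ to equal the constant $1/\overline{\alpha(w_1)}$; hence $\alpha$ is constant.

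Combining this with the symmetry from the first step (applied with the roles of $\alpha$ and $\beta$ exchanged, using $K_{\beta}\subset\alpha H^2$ and the kernels $k_w^{\beta}$) yields the dual statement: if $\alpha$ has a zero in $\mathbb{D}$, then $\beta$ is constant. The lemma then follows by a short case split. If both $\alpha$ and $\beta$ are zero-free we are in the first alternative, while if either one has a zero in $\mathbb{D}$ the paragraph above (or its dual) forces the other to be constant, placing us in the second alternative.

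I do not expect a deep obstacle here, as the statement is genuinely a technical lemma, but the one point that must be handled with care is the passage from \eqref{9} to its symmetric form $K_{\beta}\subset\alpha H^2$, since it relies on reading $K_{\alpha}$ and $\beta H^2$ as mutually complementary subspaces of $H^2$; getting this right is what lets a single kernel computation do all the work. The only alternative I see to the symmetry trick is to argue the two directions separately, for instance by noting that a zero $w_0$ of $\alpha$ puts the outer Szeg\"o kernel $k_{w_0}$ into $\beta H^2$ and then invoking uniqueness of the inner--outer factorization to conclude that $\beta$ must be constant; the complement symmetry strikes me as cleaner, so I would prefer it.
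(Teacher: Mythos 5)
Your proof is correct and follows essentially the same route as the paper: a zero of $\beta$ forces every element of $K_{\alpha}$ (in particular its reproducing kernels) to vanish there, which makes $\alpha$ constant, and the orthogonal-complement symmetry $K_{\alpha}\subset\beta H^2\iff K_{\beta}\subset\alpha H^2$ then settles the other case. The only cosmetic differences are that the paper evaluates the single kernel $k_{z_0}^{\alpha}$ at the zero $z_0$ itself and invokes the maximum modulus principle, whereas you evaluate the whole family $k_w^{\alpha}$ at the zero and get constancy directly, and that you justify the complement symmetry which the paper merely asserts.
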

\begin{proof}
Assume that (\ref{9}) holds. If $\beta(z_0)=0$ for some $z_0\in\mathbb{D}$, then $f(z_0)=0$ for every $f\in K_{\alpha}$. For $f=k_{z_0}^{\alpha}$ we get
\begin{displaymath}
k_{z_0}^{\alpha}(z_0)=\Vert k_{z_0}^{\alpha}\Vert ^2 =
\frac{1-\vert\alpha(z_0)\vert ^2}{1-\vert z_0\vert ^2}=0,
\end{displaymath} which implies that $\vert\alpha(z_0)\vert=1$. By the maximum modulus principle, $\alpha$ is a constant function.
Hence, the inclusion \eqref{9} implies that $\beta$ has no zeros in $\mathbb{D}$, or $\alpha$ is a constant function. But \eqref{9} is equivalent to
\begin{equation*}
K_{\beta}\subset \alpha H^2,
\end{equation*}
and, by the same reasoning, \eqref{9} also implies that $\alpha$ has no zeros in $\mathbb{D}$, or $\beta$ is a constant function. This completes the proof.
\end{proof}

Lemma \ref{le1} can be rephrased as follows. If $\alpha$, $\beta$ are two nonconstant inner functions and at least one of them has a zero in $\mathbb{D}$, then the inclusion $K_{\alpha}\subset \beta H^2$ does not hold. This allows us to prove the following version of Theorem \ref{zero}.

%In particular, this is the case when $\alpha$ and $\beta$ are two nonconstant Blaschke products.

%It is easy to verify that if $\varphi\in \overline{\alpha H^2}+\beta H^2$ then $A^{\alpha, \beta}_{\varphi}=0$ (see \cite{ptak} or the proof below). The authors in \cite{ptak} prove the equivalence of this two conditions in the case when $\beta\leq\alpha$ ($\beta$ divides $\alpha$, i.e., $\alpha=\beta h$ for some $h\in H^2$). Here we give a proof of that equivalence without assuming that $\beta\leq\alpha$. This was also done in \cite{part}. The authors in \cite{part} prove the fact for the case of asymmetric truncated Toeplitz operators on the half-plane.

\begin{prop}\label{zero1}
Let $\alpha$, $\beta$ be two nonconstant inner functions such that each of them has a zero in $\mathbb{D}$ and let
$A^{\alpha, \beta}_{\varphi}:K_{\alpha}\to K_{\beta}$ be a bounded asymmetric truncated Toeplitz operator with $\varphi\in L^2(\partial\mathbb{D})$. Then $A^{\alpha, \beta}_{\varphi}=0$ if and only if $\varphi\in \overline{\alpha H^2}+\beta H^2$.
\end{prop}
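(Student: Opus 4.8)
The plan is to prove both implications, handling the nontrivial direction by testing the operator against the two Szeg\"{o} kernels supplied by the hypothesis.

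For the easy implication, suppose $\varphi=\overline{\alpha g}+\beta h$ with $g,h\in H^2$. Given $f\in K_{\alpha}^{\infty}$ and $u\in K_{\beta}^{\infty}$, I would split $\langle\varphi f,u\rangle$ into two summands. The summand coming from $\beta h$ lies in $\beta H^2$ (since $hf\in H^2$ and $u$ is bounded), hence is orthogonal to $u\in K_{\beta}$. The summand coming from $\overline{\alpha g}$ rewrites as $\langle f,\alpha gu\rangle$, and since $gu\in H^2$ while $f\perp\alpha H^2$, it vanishes. Thus $P_{\beta}(\varphi f)=0$ on the dense set $K_{\alpha}^{\infty}$, and $A_{\varphi}^{\alpha,\beta}=0$.

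For the converse the first step is a reformulation. Using the orthogonal decomposition $L^2=\overline{H_0^2}\oplus K_{\beta}\oplus\beta H^2$, where $H_0^2=zH^2$ and $\overline{H_0^2}=(H^2)^{\perp}$, one checks that $P_{\beta}(\varphi f)=0$ is equivalent to $P(\varphi f)\in\beta H^2$, with $P$ the Szeg\"{o} projection. Hence $A_{\varphi}^{\alpha,\beta}=0$ is equivalent to the condition that $P(\varphi f)\in\beta H^2$ for every $f\in K_{\alpha}^{\infty}$. Passing to the adjoint $(A_{\varphi}^{\alpha,\beta})^{*}=A_{\overline{\varphi}}^{\beta,\alpha}$ and applying the same reformulation yields the companion condition $P(\overline{\varphi}\,u)\in\alpha H^2$ for every $u\in K_{\beta}^{\infty}$.

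The key step exploits the hypothesis. Choose $a,b\in\mathbb{D}$ with $\alpha(a)=0$ and $\beta(b)=0$; then $k_a^{\alpha}=k_a$ and $k_b^{\beta}=k_b$ are the Szeg\"{o} kernels, which are bounded and hence admissible test functions. Write $\varphi=\overline{\varphi_1}+\varphi_2$ with $\varphi_1\in H_0^2$ and $\varphi_2=P\varphi\in H^2$. A short Fourier computation gives the projection identity $P(\overline{w}\,k_a)=\overline{w(a)}\,k_a$ for $w\in H^2$. Applying the two conditions with $f=k_a$ and $u=k_b$, and using that $k_a,k_b$ are invertible in $H^{\infty}$ (so that $\psi k_a\in\beta H^2\Leftrightarrow\psi\in\beta H^2$, and likewise for $k_b$), I expect to obtain
\[
\varphi_2+\overline{\varphi_1(a)}\in\beta H^2,\qquad \varphi_1+\overline{\varphi_2(b)}\in\alpha H^2.
\]
Writing $\varphi_1=\alpha\chi_1-\overline{\varphi_2(b)}$ and $\varphi_2=\beta\chi_2-\overline{\varphi_1(a)}$ with $\chi_1,\chi_2\in H^2$ then gives $\varphi=\overline{\alpha\chi_1}+\beta\chi_2-c$ with leftover constant $c=\varphi_2(b)+\overline{\varphi_1(a)}$.

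The main obstacle, and the precise place where the assumption that each of $\alpha,\beta$ has a zero is indispensable, is disposing of this constant $c$. Evaluating the second membership at $z=a$ and using $\alpha(a)=0$ gives $\varphi_1(a)=-\overline{\varphi_2(b)}$, that is $\overline{\varphi_1(a)}=-\varphi_2(b)$, whence $c=0$. Therefore $\varphi=\overline{\alpha\chi_1}+\beta\chi_2\in\overline{\alpha H^2}+\beta H^2$, which completes the proof. Note that without the zeros one cannot realize $k_a^{\alpha}$ and $k_b^{\beta}$ as genuine Szeg\"{o} kernels, and it is exactly this feature that powers both the projection identity and the final cancellation; the case of general inner functions, treated in Theorem \ref{zero}, will require an additional reduction.
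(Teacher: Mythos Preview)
Your proof is correct and shares the paper's core idea: test the vanishing operator and its adjoint at the Szeg\"{o} kernels $k_a\in K_\alpha$ and $k_b\in K_\beta$ supplied by the zeros of $\alpha$ and $\beta$, obtaining the two memberships
\[
\varphi_2+\overline{\varphi_1(a)}\in\beta H^2,\qquad \varphi_1+\overline{\varphi_2(b)}\in\alpha H^2.
\]
The paper does the same, after first normalizing to a symbol of the form $\overline{\chi}+\psi$ with $\chi\in K_\alpha$, $\psi\in K_\beta$, whereas you work directly with the Riesz decomposition $\varphi=\overline{\varphi_1}+\varphi_2$; this is only a cosmetic difference.

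The one substantive divergence is in how the leftover constant is eliminated. The paper argues indirectly: it observes that the constant symbol $\psi(w_0)+\overline{\chi(z_0)}$ gives the zero operator, so if it were nonzero one would have $P_\beta|_{K_\alpha}=0$, i.e.\ $K_\alpha\subset\beta H^2$, which is ruled out by Lemma~\ref{le1}. Your route is more direct: evaluating the membership $\varphi_1+\overline{\varphi_2(b)}\in\alpha H^2$ at the zero $a$ of $\alpha$ immediately gives $\varphi_1(a)=-\overline{\varphi_2(b)}$, hence $c=0$. This avoids Lemma~\ref{le1} altogether and is a genuine streamlining; in fact the same evaluation trick would have worked in the paper's notation (evaluating $\chi+\overline{\psi(w_0)}\in\alpha H^2$ at $z_0$ yields $\overline{\chi(z_0)}+\psi(w_0)=0$ directly). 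Both arguments buy the same conclusion; yours is simply shorter.
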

\begin{proof}
The fact that $\varphi\in \overline{\alpha H^2}+\beta H^2$ implies $A^{\alpha, \beta}_{\varphi}=0$ was proved in \cite[Thm. 4.3]{ptak}. For the convenience of the reader we repeat the reasoning from \cite{ptak}.

Assume that $\varphi= \overline{\alpha h}_1+\beta h_2$ with $h_1, h_2\in H^2$. Then, for every $f\in K_{\alpha}^{\infty}$,
\begin{displaymath}
A^{\alpha, \beta}_{\varphi} f=P_{\beta}(\overline{\alpha h}_1f+\beta h_2f)=P_{\beta}(\overline{\alpha h}_1f).
\end{displaymath}
 Since $f\perp \alpha H^2$, we see that $ \overline{\alpha h}_1f\perp H^2$ and $P_{\beta}\left(\overline{\alpha h}_1f\right)=0$. The density of $K_{\alpha}^{\infty}$ implies that $A^{\alpha, \beta}_{\varphi}=0$. Note that this part of the proof does not depend on the existence of zeros of $\alpha$ and $\beta$.

Let us now assume that $A^{\alpha, \beta}_{\varphi}=0$. By the first part of the proof, we can also assume that $\varphi=\overline{\chi}+\psi$ for some $\chi\in K_{\alpha}$, $\psi\in K_{\beta}$. Let $z_0\in\mathbb{D}$ be a zero of $\alpha$. Then $k_{z_0}^{\alpha}=k_{z_0}$ and
\begin{displaymath}
\begin{split}
A^{\alpha, \beta}_{\overline{\chi}}k_{z_0}^{\alpha}&=P_{\beta}(\overline{\chi} k_{z_0})\\
&=P_{\beta}\left(\overline{z}\frac{\overline{\chi(z)}-\overline{\chi(z_0)}}{\overline{z}-\overline{z_0}}+\overline{\chi(z_0)}k_{z_0}\right)\\ &=\overline{\chi(z_0)}k_{z_0}^{\beta},
\end{split}
\end{displaymath}
because the quotient $(\chi(z)-\chi(z_0))/(z-z_0)$ belongs to $K_{\alpha}$ (see \cite[Subsection 2.6]{s}).

Hence,
\begin{displaymath}
\begin{split}
0&=A_{\varphi}^{\alpha, \beta}k_{z_0}^{\alpha}=A_{\overline{\chi}+\psi}^{\alpha, \beta}k_{z_0}^{\alpha}\\ &= \overline{\chi(z_0)}k_{z_0}^{\beta}+A_{{\psi}}^{\alpha, \beta}k_{z_0}^{\alpha}= P_{\beta}\left[(\overline{\chi(z_0)}+\psi )k_{z_0}\right],
\end{split}
\end{displaymath}
which means that
$$(\overline{\chi(z_0)}+\psi )k_{z_0}\in \beta H^2$$
and, consequently,
\begin{equation}\label{10}
\overline{\chi(z_0)}+\psi\in \beta H^2.
\end{equation}

On the other hand (\cite[Lem. 3.2]{ptak}),
\begin{displaymath}
A_{\overline{\psi}+\chi}^{\beta,\alpha}=\left(A_{\overline{\chi}+\psi}^{\alpha, \beta}\right)^{*}=0,
\end{displaymath}
and a similar reasoning can be used to show that if $\beta(w_0)=0$, $w_0\in \mathbb{D}$, then
\begin{equation}\label{11}
\chi+\overline{\psi(w_0)}\in \alpha H^2.
\end{equation}

By \eqref{10}, \eqref{11} and the first part of the proof we get
\begin{displaymath}
A_{\overline{\chi}+\psi(w_0)+\overline{\chi(z_0)}+\psi}^{\alpha, \beta}=0,
\end{displaymath}
and
\begin{displaymath}
A_{\psi(w_0)+\overline{\chi(z_0)}}^{\alpha, \beta}=-A_{\overline{\chi}+\psi}^{\alpha, \beta}=0.
\end{displaymath}
From this,
\begin{displaymath}
P_{\beta}\left [(\psi(w_0)+\overline{\chi(z_0)})f \right]=0
\end{displaymath} for all $f\in K_{\alpha}$.

If $\psi(w_0)+\overline{\chi(z_0)}\neq 0$, then the above means that $P_{\beta}(f)=0$ for all $f\in K_{\alpha}$, that is,
$ K_{\alpha}\subset \beta H^2$. However, by Lemma \ref{le1}, this cannot be the case here. So

\begin{displaymath}
\psi(w_0)+\overline{\chi(z_0)}=0
\end{displaymath} and
\begin{displaymath}
%\begin{split}
\varphi=\overline{\chi}+\psi= \overline{\chi}+\psi(w_0)+\overline{\chi(z_0)}+\psi \in  \overline{\alpha H^2}+\beta H^2.
%\end{split}
\end{displaymath}
%by \eqref{10} and \eqref{11}. This completes the proof.
\end{proof}
%\begin{cor}
%Let $\alpha,\beta$ be two nonconstant Blaschke products and let $A^{\alpha, \beta}_{\varphi}\in \mathscr{T}(\alpha,\beta)$,$\varphi\in L^2$. Then $A^{\alpha, \beta}_{\varphi}=0 $ if and only if $\varphi\in  \overline{\alpha H^2}+\beta H^2$.
%\end{cor}

To give a proof of Theorem \ref{zero} we use the so-called Crofoot transform. For any inner function $\alpha$ and $w\in\mathbb{D}$, the Crofoot transform $J_w^{\alpha}$ is the multiplication operator given by
\begin{equation}\label{crof1}
J_w^{\alpha}f(z)=\frac{\sqrt{1-|w|^2}}{1-\overline{w}\alpha(z)}f(z).
\end{equation}

The Crofoot transform $J_w^{\alpha}$ is a unitary operator from $K_{\alpha}$ onto $K_{\alpha_w}$, where

\begin{equation}\label{crof2}
\alpha_w(z)=\frac{w-\alpha(z)}{1-\overline{w}\alpha(z)}.
\end{equation}
(see, for example, \cite[Thm. 10]{crof} and \cite[pp. 521--523]{s}). Moreover,
\begin{displaymath}
\begin{split}
\left(J_w^{\alpha}\right)^{*}f&=\left(J_w^{\alpha}\right)^{-1}f=J_w^{\alpha_w}f\\
&=\frac{\sqrt{1-|w|^2}}{1-\overline{w}\alpha_w}f=\frac{1-\overline{w}\alpha}{\sqrt{1-|w|^2}}f.
\end{split}
\end{displaymath}

%To prove Proposition \ref{pp1} we need a technical lemma.
\begin{lem}
Let $\alpha$ be an inner function and $w\in\mathbb{D}$. For every $z\in\mathbb{D}$ we have
\begin{equation}\label{kernel}
k_z^{\alpha_w}=\frac{1-|w|^2}{(1-w\overline{\alpha(z)})(1-\overline{w}\alpha)}k_z^{\alpha}.
\end{equation}
\end{lem}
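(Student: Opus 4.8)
The plan is to avoid substituting \eqref{crof2} directly into the kernel formula \eqref{kerker} and instead exploit that the Crofoot transform $J_w^{\alpha}\colon K_{\alpha}\to K_{\alpha_w}$ is unitary and is (a scalar multiple of) a multiplication operator with an explicit action. The guiding principle is that reproducing kernels are transported by a unitary $U$ through $U^{*}$: since $k_z^{\alpha_w}$ reproduces point evaluation at $z$ in $K_{\alpha_w}$, pulling it back by $\left(J_w^{\alpha}\right)^{*}$ should produce a scalar multiple of $k_z^{\alpha}$, and the multiplicative nature of $J_w^{\alpha}$ makes that scalar computable.

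First I would fix $z\in\mathbb{D}$ and compute $\left(J_w^{\alpha}\right)^{*}k_z^{\alpha_w}$. For an arbitrary $g\in K_{\alpha}$, combining the definition of the adjoint, the reproducing property in $K_{\alpha_w}$, and the explicit formula \eqref{crof1} for $J_w^{\alpha}$ gives
\begin{align*}
\langle g,(J_w^{\alpha})^{*}k_z^{\alpha_w}\rangle
&=\langle J_w^{\alpha}g,k_z^{\alpha_w}\rangle=(J_w^{\alpha}g)(z)\\
&=\frac{\sqrt{1-|w|^2}}{1-\overline{w}\alpha(z)}\,g(z)=\frac{\sqrt{1-|w|^2}}{1-\overline{w}\alpha(z)}\,\langle g,k_z^{\alpha}\rangle.
\end{align*}
Since this holds for every $g\in K_{\alpha}$, and since the scalar sits outside the inner product (so it must be conjugated when absorbed into the second slot), I conclude
$$(J_w^{\alpha})^{*}k_z^{\alpha_w}=\frac{\sqrt{1-|w|^2}}{1-w\overline{\alpha(z)}}\,k_z^{\alpha}.$$

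Next I would apply $J_w^{\alpha}$ to both sides, using $J_w^{\alpha}(J_w^{\alpha})^{*}=I$ on $K_{\alpha_w}$. Because $\frac{\sqrt{1-|w|^2}}{1-w\overline{\alpha(z)}}$ is a genuine constant (the value $\alpha(z)$ at the fixed point $z$ is frozen), it passes through the linear map $J_w^{\alpha}$, and a second use of \eqref{crof1} yields
\begin{align*}
k_z^{\alpha_w}&=\frac{\sqrt{1-|w|^2}}{1-w\overline{\alpha(z)}}\,J_w^{\alpha}k_z^{\alpha}\\
&=\frac{\sqrt{1-|w|^2}}{1-w\overline{\alpha(z)}}\cdot\frac{\sqrt{1-|w|^2}}{1-\overline{w}\alpha}\,k_z^{\alpha}=\frac{1-|w|^2}{(1-w\overline{\alpha(z)})(1-\overline{w}\alpha)}\,k_z^{\alpha},
\end{align*}
which is exactly \eqref{kernel}.

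The main obstacle is not any deep estimate but the bookkeeping of which symbols are constants and which are functions: $\overline{\alpha(z)}$ is the conjugated value at the fixed base point $z$, whereas the bare $\alpha$ appearing in the denominator $1-\overline{w}\alpha$ is the function evaluated at the free variable. Tracking this correctly is precisely what makes the conjugation in the second step land as $1-w\overline{\alpha(z)}$ rather than $1-\overline{w}\alpha(z)$; one also notes the denominators never vanish since $|\overline{w}\alpha(z)|\le|w|<1$. As a sanity check, one could instead verify \eqref{kernel} by brute force: after cancelling the common factor $(1-\overline{z}\zeta)^{-1}$ from both kernels, the claim reduces to the algebraic identity $1-\overline{\alpha_w(z)}\alpha_w(\zeta)=\frac{(1-|w|^2)(1-\overline{\alpha(z)}\alpha(\zeta))}{(1-w\overline{\alpha(z)})(1-\overline{w}\alpha(\zeta))}$, which follows by clearing denominators and expanding, but the operator-theoretic route above is shorter and conceptually cleaner.
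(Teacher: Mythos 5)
Your proof is correct, but it runs along a genuinely different line than the paper's. The paper proves \eqref{kernel} by the brute-force route you mention only as a sanity check at the end: it substitutes the definition \eqref{crof2} of $\alpha_w$ into the kernel formula \eqref{kerker} and simplifies $1-\overline{\alpha_w(z)}\alpha_w(\lambda)$ to $\frac{(1-|w|^2)(1-\overline{\alpha(z)}\alpha(\lambda))}{(1-w\overline{\alpha(z)})(1-\overline{w}\alpha(\lambda))}$, which is pure algebra and uses nothing about the operator $J_w^{\alpha}$ beyond the formula for $\alpha_w$. You instead exploit the unitarity of the Crofoot transform: the chain $\langle g,(J_w^{\alpha})^{*}k_z^{\alpha_w}\rangle=(J_w^{\alpha}g)(z)=c\,\langle g,k_z^{\alpha}\rangle$ pins down $(J_w^{\alpha})^{*}k_z^{\alpha_w}=\overline{c}\,k_z^{\alpha}$ (your conjugation bookkeeping, including $\overline{1-\overline{w}\alpha(z)}=1-w\overline{\alpha(z)}$, is handled correctly), and then $J_w^{\alpha}(J_w^{\alpha})^{*}=I$ on $K_{\alpha_w}$ finishes the computation. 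What your route buys is conceptual transparency: it exhibits the proportionality of the two kernels as an instance of the general principle that a unitary multiplication operator between reproducing kernel spaces carries kernels to scalar multiples of kernels, with the scalar forced by the reproducing property. What it costs is logical economy: you must import the fact that $J_w^{\alpha}$ maps $K_{\alpha}$ \emph{unitarily onto} $K_{\alpha_w}$ (both isometry and surjectivity are needed for $J_w^{\alpha}(J_w^{\alpha})^{*}=I$ on $K_{\alpha_w}$). The paper does state this fact with references to Crofoot and Sarason before the lemma, so your use of it is not circular within this paper, but the paper's own proof is self-contained and would survive even in a treatment where the kernel identity is used as an ingredient in establishing the unitarity of $J_w^{\alpha}$, rather than the other way around.
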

\begin{proof}

Fix $w,z \in\mathbb{D}$. The reproducing kernel $k_z^{\alpha_w}$ is given by
\begin{displaymath}
k_z^{\alpha_w}(\lambda)=\frac{1-\overline{\alpha_w(z)}\alpha_w(\lambda)}{1-\overline{z}\lambda},\quad \lambda \in \mathbb{D}.
\end{displaymath}

Since
\begin{displaymath}
\begin{split}
1-\overline{\alpha_w(z)}\alpha_w(\lambda)&=1-\frac{\overline{w}-\overline{\alpha(z)}}{1-w\overline{\alpha(z)}}\frac{w-\alpha(\lambda)}{1-\overline{w}\alpha(\lambda)}\\ &=
\frac{(1-|w|^2)(1-\overline{\alpha(z)}\alpha(\lambda))}{(1-w\overline{\alpha(z)} )(1-\overline{w}\alpha(\lambda))},
\end{split}
\end{displaymath}
we have
\begin{displaymath}
\begin{split}
k_z^{\alpha_w}(\lambda)&= \frac{1-|w|^2}{(1-w\overline{\alpha(z)})(1-\overline{w}\alpha(\lambda))}\frac{1-\overline{\alpha(z)}\alpha(\lambda)}{1-\overline{z}\lambda}\\ &=
\frac{(1-|w|^2)}{(1-w\overline{\alpha(z)})(1-\overline{w}\alpha(\lambda))}k_z^{\alpha}(\lambda).
\end{split}
\end{displaymath}
\end{proof}

It is known that the map
$$A\mapsto J_w^{\alpha}A\left(J_w^{\alpha}\right)^{-1}, \quad A\in\mathscr{T}(\alpha),$$
carries $\mathscr{T}(\alpha)$ onto $\mathscr{T}(\alpha_w)$ (see \cite{si}). A similar result is true for the asymmetric truncated Toeplitz operators.

\begin{prop}\label{pp1}
Let $\alpha$, $\beta$ be two inner functions. Let $a, b\in\mathbb{D}$ and let the functions $\alpha_a$, $\beta_b$ and the operators $J_a^{\alpha}\ \colon\ K_{\alpha}\rightarrow K_{\alpha_a}$, $J_b^{\beta}\ \colon\ K_{\beta}\rightarrow K_{\beta_b}$ be defined as in \eqref{crof2} and \eqref{crof1}, respectively. If $A$ is a bounded linear operator from $K_{\alpha}$ into $K_{\beta}$, then $A$ belongs to $\mathscr{T}(\alpha,\beta)$ if and only if $J_b^{\beta}A\left(J_a^{\alpha}\right)^{-1}$ belongs to $\mathscr{T}(\alpha_a,\beta_b)$. Moreover, if $A=A_{\varphi}^{\alpha,\beta}$, then $J_b^{\beta}A\left(J_a^{\alpha}\right)^{-1}=A_{\phi}^{\alpha_a,\beta_b}$ with
\begin{equation}\label{symbol}
\phi=\frac{(1-\overline{a}\alpha)(1-b\overline{\beta})}{\sqrt{1-|a|^2}\sqrt{1-|b|^2}}\varphi.
\end{equation}
%$$\phi=\frac{1-\overline{a}\alpha}{\sqrt{1-|a|^2}}\frac{1-b\overline{\beta}}{\sqrt{1-|b|^2}}\varphi.$$
\end{prop}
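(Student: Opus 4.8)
The plan is to reduce the whole statement to the explicit symbol formula \eqref{symbol} (the ``Moreover'' part), and then to read off the equivalence of membership from that formula together with the involutivity of the Crofoot construction. Once we know that conjugating $A_\varphi^{\alpha,\beta}$ by the two Crofoot unitaries yields $A_\phi^{\alpha_a,\beta_b}$ with $\phi$ as in \eqref{symbol}, the forward implication is immediate: if $A=A_\varphi^{\alpha,\beta}\in\mathscr{T}(\alpha,\beta)$ is bounded, then $J_b^\beta A(J_a^\alpha)^{-1}=A_\phi^{\alpha_a,\beta_b}$ is bounded as well (it is obtained from the bounded operator $A$ by pre- and post-composition with unitaries), hence it lies in $\mathscr{T}(\alpha_a,\beta_b)$.

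First I would prove \eqref{symbol} by comparing sesquilinear forms on the dense sets of bounded functions. Fix $g\in K_{\alpha_a}^{\infty}$ and $f\in K_{\beta_b}^{\infty}$. Using that $J_b^\beta\colon K_\beta\to K_{\beta_b}$ is unitary I move it onto the second slot as its adjoint, and then substitute the explicit formulas $(J_a^\alpha)^{-1}g=\tfrac{1-\overline a\alpha}{\sqrt{1-|a|^2}}\,g$ and $(J_b^\beta)^{*}f=\tfrac{1-\overline b\beta}{\sqrt{1-|b|^2}}\,f$ recorded after \eqref{crof2}. Since $(J_b^\beta)^{*}f\in K_\beta$, the projection $P_\beta$ hidden in $A_\varphi^{\alpha,\beta}$ may be dropped, leaving an $L^2$-inner product:
\[
\langle J_b^\beta A_\varphi^{\alpha,\beta}(J_a^\alpha)^{-1}g,\,f\rangle
=\Big\langle \varphi\,\tfrac{1-\overline a\alpha}{\sqrt{1-|a|^2}}\,g,\ \tfrac{1-\overline b\beta}{\sqrt{1-|b|^2}}\,f\Big\rangle.
\]
Pulling the (bounded) multiplier out of the second slot conjugates $1-\overline b\beta$ into $1-b\overline\beta$, using $|\beta|=1$ a.e.\ on $\partial\mathbb D$; this is exactly the factor appearing in \eqref{symbol}, while the holomorphic multiplier $1-\overline a\alpha$ in the first slot is unchanged. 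Thus the form equals $\langle\phi g,f\rangle$ with $\phi$ as in \eqref{symbol}, and since $f\in K_{\beta_b}$ this is $\langle A_\phi^{\alpha_a,\beta_b}g,f\rangle$. As all the multipliers are bounded, $\phi\in L^2(\partial\mathbb D)$, so $A_\phi^{\alpha_a,\beta_b}$ is a bona fide asymmetric truncated Toeplitz operator; the form identity over the dense sets of bounded $g,f$ then upgrades to the operator identity $J_b^\beta A_\varphi^{\alpha,\beta}(J_a^\alpha)^{-1}=A_\phi^{\alpha_a,\beta_b}$.

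For the reverse membership I would invoke the involutivity of the construction. A short computation gives $(\alpha_a)_a=\alpha$ and $(\beta_b)_b=\beta$, while the formula after \eqref{crof2} gives $(J_a^\alpha)^{-1}=J_a^{\alpha_a}$ and $(J_b^\beta)^{-1}=J_b^{\beta_b}$; hence $J_a^\alpha=(J_a^{\alpha_a})^{-1}$ and $J_b^\beta=(J_b^{\beta_b})^{-1}$. So if $B:=J_b^\beta A(J_a^\alpha)^{-1}=A_\phi^{\alpha_a,\beta_b}\in\mathscr{T}(\alpha_a,\beta_b)$, then $A=J_b^{\beta_b}B(J_a^{\alpha_a})^{-1}$, which is precisely the Crofoot conjugation applied to the pair $(\alpha_a,\beta_b)$ with parameters $a,b$. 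Applying the already-established formula \eqref{symbol} in this reversed setting identifies $A$ as $A_\psi^{\alpha,\beta}$ for an explicit $\psi\in L^2(\partial\mathbb D)$, whence $A\in\mathscr{T}(\alpha,\beta)$.

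The only genuinely delicate point is the bookkeeping of complex conjugation in the form computation. The adjoint $(J_b^\beta)^{*}$ is multiplication by $\tfrac{1-\overline b\beta}{\sqrt{1-|b|^2}}$, i.e.\ by the reciprocal rather than the pointwise conjugate of the multiplier defining $J_b^\beta$, and it is the passage of $1-\overline b\beta$ from the second slot of the inner product into the first that converts it to the anti-holomorphic factor $1-b\overline\beta$ in \eqref{symbol}. This asymmetry between the $\alpha$-factor (holomorphic, $1-\overline a\alpha$) and the $\beta$-factor (anti-holomorphic, $1-b\overline\beta$) is the one place where a conjugation slip would produce the wrong symbol, so that step deserves care; the remaining manipulations, including the density and boundedness bookkeeping, are routine.
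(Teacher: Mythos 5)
Your proposal is correct, and its overall skeleton coincides with the paper's: first establish the explicit symbol formula \eqref{symbol} for the conjugated operator, then obtain the converse membership from the involutivity of the Crofoot construction, i.e.\ $(\alpha_a)_a=\alpha$, $(\beta_b)_b=\beta$ and $(J_a^{\alpha})^{-1}=J_a^{\alpha_a}$, exactly as in the paper. Where you differ is in how \eqref{symbol} is verified. The paper computes $J_b^{\beta}A_{\varphi}^{\alpha,\beta}(J_a^{\alpha})^{-1}f(z)$ pointwise, writing the projection $P_{\beta}$ as an inner product against $k_z^{\beta}$ and then invoking the preceding lemma \eqref{kernel}, which relates $k_z^{\beta}$ to $k_z^{\beta_b}$; your argument instead tests against $f\in K_{\beta_b}^{\infty}$, moves the unitary $J_b^{\beta}$ to the other slot as its adjoint (multiplication by $(1-\overline{b}\beta)/\sqrt{1-|b|^2}$), drops $P_{\beta}$ because the test vector lies in $K_{\beta}$, and transfers the multiplier across the $L^2$ pairing. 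This sesquilinear-form route is a genuine simplification in that it makes the kernel lemma \eqref{kernel} unnecessary for this proposition, at the modest cost of an extra density step (agreement of forms on $K_{\alpha_a}^{\infty}\times K_{\beta_b}^{\infty}$ upgraded to an operator identity), which you handle correctly. One small inaccuracy in your write-up: transferring the factor $1-\overline{b}\beta$ from the second slot of the inner product to the first is plain complex conjugation, $\overline{1-\overline{b}\beta}=1-b\overline{\beta}$, and does not use $|\beta|=1$ a.e.\ on $\partial\mathbb{D}$; unimodularity plays no role at this point (it is needed only later, in the proof of Theorem \ref{zero}, to simplify $(\overline{a}-\overline{\alpha})/(1-\overline{a}\alpha)=-\overline{\alpha}$). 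This slip does not affect the validity of the argument.
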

\begin{proof}

Let $A$ be a bounded linear operator from $K_{\alpha}$ into $K_{\beta}$. Assume first that $A$ belongs to $\mathscr{T}(\alpha,\beta)$, $A=A_{\varphi}^{\alpha,\beta}$ for $\varphi\in L^2(\partial\mathbb{D})$. We show that $J_b^{\beta}A\left(J_a^{\alpha}\right)^{-1}=A_{\phi}^{\alpha_a,\beta_b}$ with $\phi$ as in \eqref{symbol}.

For every $f\in K_{\alpha_a}^{\infty}$ and $z\in \mathbb{D}$ we have
\begin{displaymath}
\begin{split}
J_b^{\beta}A_{\varphi}^{\alpha,\beta}\left(J_a^{\alpha}\right)^{-1}f(z)&=\frac{\sqrt{1-|b|^2}}{1-\overline{b}\beta(z)} P_{\beta}\left(\frac{1-\overline{a}\alpha}{\sqrt{1-|a|^2}}\varphi f\right)(z)\\&=\frac{\sqrt{1-|b|^2}}{1-\overline{b}\beta(z)}\left\langle  \frac{1-\overline{a}\alpha}{\sqrt{1-|a|^2}}\varphi f;k_z^{\beta}\right\rangle.
\end{split}
\end{displaymath}
By \eqref{kernel},
\begin{displaymath}
\begin{split}
J_b^{\beta}A_{\varphi}^{\alpha,\beta}\left(J_a^{\alpha}\right)^{-1}f(z)&=\frac{\sqrt{1-|b|^2}}{1-\overline{b}\beta(z)}\left\langle  \frac{1-\overline{a}\alpha}{\sqrt{1-|a|^2}}\varphi f;\frac{\left(1-b\overline{\beta(z)}\right )(1-\overline{b}\beta)}{1-|b|^2} k_z^{\beta_b}\right\rangle\\&= \left\langle\frac{1-b\overline{\beta}}{\sqrt{1-|b|^2}} \frac{1-\overline{a}\alpha}{\sqrt{1-|a|^2}}\varphi f;k_z^{\beta_b} \right\rangle\\&=
P_{\beta_b}\left (\frac{(1-b\overline{\beta})(1-\overline{a}\alpha)}{\sqrt{1-|b|^2}\sqrt{1-|a|^2}}\varphi  f\right )(z)\\&=A_{\phi}^{\alpha_a,\beta_b}f(z) .
\end{split}
\end{displaymath}
Thus $A\in\mathscr{T}(\alpha,\beta)$ implies that $J_b^{\beta}A\left(J_a^{\alpha}\right)^{-1}\in\mathscr{T}(\alpha_a,\beta_b)$.

To prove the other implication assume that $J_b^{\beta}A\left(J_a^{\alpha}\right)^{-1}=A_{\phi}^{\alpha_a,\beta_b}\in  \mathscr{T}(\alpha_a,\beta_b) $ for some $\phi\in L^2(\partial\mathbb{D})$. Then
$$A=(J_b^{\beta})^{-1}A_{\phi}^{\alpha_a,\beta_b}J_a^{\alpha}=J_b^{\beta_b}A_{\phi}^{\alpha_a,\beta_b}\left(J_a^{\alpha_a}\right)^{-1}.$$
But $(\alpha_a)_a=\alpha$ and $(\beta_b)_b=\beta$, and, by the first part of the proof, $$A=J_b^{\beta_b}A_{\phi}^{\alpha_a,\beta_b}\left(J_a^{\alpha_a}\right)^{-1}=A_{\varphi}^{\alpha,\beta}$$ with

\begin{displaymath}
\varphi=\frac{(1-\overline{a}\alpha_a)(1-b\overline{\beta_b})}{\sqrt{1-|a|^2}\sqrt{1-|b|^2}}\phi.
\end{displaymath}
Hence, $A\in\mathscr{T}(\alpha,\beta)$. An easy computation shows that $\phi$ satisfies \eqref{symbol}.
\end{proof}

%We are now ready to prove the following.

\begin{proof}[Proof of Theorem \ref{zero}]

The fact that $\varphi\in \overline{\alpha H^2}+\beta H^2$ implies $A^{\alpha, \beta}_{\varphi}=0$  was established in the proof of Proposition \ref{zero1}. Assume now that $\varphi\in L^2(\partial\mathbb{D})$ and
$A^{\alpha, \beta}_{\varphi}=0$ .

If $\alpha(0)=\beta(0)=0$, then $\varphi\in \overline{\alpha H^2}+\beta H^2$ by Proposition \ref{zero1}. If $\alpha(0)\neq 0$ or $\beta(0)\neq 0$, put $a=\alpha(0)$, $b=\beta(0)$. By Proposition \ref{pp1},

\begin{displaymath}
0=J_b^{\beta}A^{\alpha,\beta}_{\varphi}\left(J_a^{\alpha}\right)^{-1}=A_{\phi}^{\alpha_a,\beta_b},
\end{displaymath} where
\begin{displaymath}
\phi=\frac{(1-\overline{a}\alpha)(1-b\overline{\beta})}{\sqrt{1-|a|^2}\sqrt{1-|b|^2}}\varphi.
\end{displaymath}

Since $\alpha_a(0)=\beta_b(0)=0$, by Proposition \ref{zero1},
\begin{displaymath}
\phi\in \overline{\alpha_a H^2}+\beta_b H^2.
\end{displaymath}
Therefore, there exist $h_1,h_2\in H^2$ such that
\begin{displaymath}
\frac{(1-\overline{a}\alpha)(1-b\overline{\beta})}{\sqrt{1-|a|^2}\sqrt{1-|b|^2}}\varphi=\frac{\overline{a}-\overline{\alpha}}{1-a\overline{\alpha}}\overline{h}_1+\frac{b-\beta}{1-\overline{b}\beta}h_2,
\end{displaymath}
and
\begin{displaymath}
\varphi=\frac{\overline{a}-\overline{\alpha}}{1-\overline{a}{\alpha}} \frac{\sqrt{1-|a|^2}\sqrt{1-|b|^2}}{(1-{a}\overline{\alpha})(1-b\overline{\beta})}\overline{h}_1+\frac{b-\beta}{1-{b}\overline{\beta}} \frac{\sqrt{1-|a|^2}\sqrt{1-|b|^2}}{(1-\overline{a}\alpha)(1-\overline{b}{\beta})}h_2.
\end{displaymath}
Since $|\alpha|=1$ and $|\beta|=1$ on the unit circle $\partial\mathbb{D}$, we see that
\begin{displaymath}
\frac{\overline{a}-\overline{\alpha}}{1-\overline{a}{\alpha}}=-\overline{\alpha}\quad\mathrm{and}\quad \frac{b-\beta}{1-{b}\overline{\beta}}=-\beta\quad \mathrm{on}\quad \partial\mathbb{D},
\end{displaymath}
and
\begin{displaymath}
\varphi=\overline{\alpha}\overline{g_1}+\beta g_2
\end{displaymath}
with

\begin{displaymath}
g_1=-\frac{\sqrt{1-|a|^2}\sqrt{1-|b|^2}}{(1-\overline{a}{\alpha})(1-\overline{b}{\beta})}h_1,\quad
g_2=\frac{\sqrt{1-|a|^2}\sqrt{1-|b|^2}}{(1-\overline{a}\alpha)(1-\overline{b}{\beta})}h_2,
\end{displaymath}
$g_1,g_2\in H^2$. This completes the proof.
\end{proof}

\begin{cor}\label{zero2}
If $\varphi$ is in $L^2(\partial\mathbb{D})$, then there is a pair of functions $\chi\in K_{\alpha}$, $\psi\in K_{\beta}$, such that $A_{\varphi}^{\alpha, \beta}=A_{\overline{\chi}+\psi}^{\alpha, \beta}$. If $\chi$, $\psi$ is one such pair, then the most general such pair is of the form $\chi-\overline{c}k_0^{\alpha}$, $\psi+c k_0^{\beta}$, with $c$ a scalar.
\end{cor}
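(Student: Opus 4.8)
The plan is to deduce both assertions from Theorem \ref{zero}, which says that two symbols in $L^2(\partial\mathbb{D})$ produce the same operator precisely when their difference lies in $\overline{\alpha H^2}+\beta H^2$.

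For the existence of the pair I would use the orthogonal decomposition $L^2(\partial\mathbb{D})=H^2\oplus\overline{zH^2}$ to write $\varphi=\varphi_+ +\overline{\varphi_-}$ with $\varphi_+,\varphi_-\in H^2$. Splitting $H^2=K_{\beta}\oplus\beta H^2$ gives $\varphi_+=\psi+\beta h_2$ with $\psi\in K_{\beta}$, and splitting $H^2=K_{\alpha}\oplus\alpha H^2$ gives $\varphi_-=\chi+\alpha h_1$ with $\chi\in K_{\alpha}$. Then $\varphi-(\overline{\chi}+\psi)=\beta h_2+\overline{\alpha h_1}\in\overline{\alpha H^2}+\beta H^2$, so by the elementary implication of Theorem \ref{zero} (the direction not requiring boundedness, as noted in the proof of Proposition \ref{zero1}) the operators $A_{\varphi}^{\alpha,\beta}$ and $A_{\overline{\chi}+\psi}^{\alpha,\beta}$ coincide. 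This part is routine.

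For the description of all such pairs, suppose $(\chi,\psi)$ and $(\chi_1,\psi_1)$ both work, and set $u=\chi-\chi_1\in K_{\alpha}$, $v=\psi-\psi_1\in K_{\beta}$. Then $A_{\overline{u}+v}^{\alpha,\beta}=0$ as an operator (which is bounded), so Theorem \ref{zero} yields $\overline{u}+v=\overline{\alpha h_1}+\beta h_2$ for some $h_1,h_2\in H^2$. The key manoeuvre is to rewrite this as $v-\beta h_2=\overline{\alpha h_1-u}$: the left-hand side lies in $H^2$ and the right-hand side in $\overline{H^2}$, so both equal a single constant $c$, using $H^2\cap\overline{H^2}=\mathbb{C}$. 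Projecting $v=c+\beta h_2$ onto $K_{\beta}$ and $u=\alpha h_1-\overline{c}$ onto $K_{\alpha}$, and recalling $P_{\alpha}(1)=k_0^{\alpha}$ and $P_{\beta}(1)=k_0^{\beta}$, I obtain $v=c\,k_0^{\beta}$ and $u=-\overline{c}\,k_0^{\alpha}$. Hence $\chi_1=\chi+\overline{c}\,k_0^{\alpha}$ and $\psi_1=\psi-c\,k_0^{\beta}$, which after replacing $c$ by $-c$ is exactly the asserted form.

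Finally I would check the converse inclusion, that every pair $(\chi-\overline{c}k_0^{\alpha},\,\psi+ck_0^{\beta})$ is itself admissible. This reduces to verifying $c(k_0^{\beta}-\overline{k_0^{\alpha}})\in\overline{\alpha H^2}+\beta H^2$; writing $k_0^{\beta}=1-\overline{\beta(0)}\beta$ and $\overline{k_0^{\alpha}}=1-\alpha(0)\overline{\alpha}$ gives $k_0^{\beta}-\overline{k_0^{\alpha}}=\alpha(0)\overline{\alpha}-\overline{\beta(0)}\beta$, whose first term lies in $\overline{\alpha H^2}$ and second in $\beta H^2$, so Theorem \ref{zero} applies again. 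The main obstacle is the middle step: recognising that the rearrangement forces the common value to be a constant via $H^2\cap\overline{H^2}=\mathbb{C}$, and then pinning down $u$ and $v$ by projecting and identifying $P_{\alpha}(1)$ and $P_{\beta}(1)$ with the kernels $k_0^{\alpha}$ and $k_0^{\beta}$.
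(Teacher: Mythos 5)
Your proof is correct, and your existence argument is the same as the paper's: decompose $\varphi=\varphi_{+}+\overline{\varphi_{-}}$ and project onto $K_{\beta}$ and $K_{\alpha}$. Where you genuinely diverge is the uniqueness step. The paper projects the identity $\overline{\chi}+\psi-\overline{\chi}_1-\psi_1=\overline{\alpha h_1}+\beta h_2$ separately onto $K_{\beta}$ and onto $K_{\alpha}$, obtaining $\psi-\psi_1=c_1k_0^{\beta}$ and $\chi-\chi_1=c_2k_0^{\alpha}$ with two a priori \emph{unrelated} constants; to couple them it substitutes back into the operator equation, arriving at $(\overline{c}_2+c_1)P_{\beta}|_{K_{\alpha}}=0$, and must then invoke Lemma \ref{le1} (i.e.\ the fact that $K_{\alpha}\not\subset\beta H^2$, so $P_{\beta}|_{K_{\alpha}}\neq 0$) to force $\overline{c}_2+c_1=0$. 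Your rearrangement $v-\beta h_2=\overline{\alpha h_1-u}$ combined with $H^2\cap\overline{H^2}=\mathbb{C}$ produces a \emph{single} constant $c$ whose conjugate automatically appears in the other component, so the coupling of the two constants is purely algebraic: you never need Lemma \ref{le1}, the non-degeneracy of $P_{\beta}|_{K_{\alpha}}$, or the nonconstancy of $\alpha$ and $\beta$ at this stage, and everything is funneled through Theorem \ref{zero} alone. This is a cleaner and marginally more self-contained argument. Your verification that all pairs $(\chi-\overline{c}k_0^{\alpha},\,\psi+ck_0^{\beta})$ remain admissible also differs in flavor from the paper's: you check that the symbol difference $c\,(k_0^{\beta}-\overline{k_0^{\alpha}})=c\,(\alpha(0)\overline{\alpha}-\overline{\beta(0)}\beta)$ lies in $\overline{\alpha H^2}+\beta H^2$, whereas the paper proves the operator identities $A_{k_0^{\beta}}^{\alpha,\beta}=A_1^{\alpha,\beta}=A_{\overline{k}_0^{\alpha}}^{\alpha,\beta}$; these are the same computation in two guises, but the paper reuses those identities in its constant-coupling step, which is precisely the step your approach renders unnecessary.
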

\begin{proof}
The proof is analogous to the proofs given in \cite{s} and \cite{ptak}.

The function $\varphi\in L^2(\partial\mathbb{D})$ can be written as $\varphi=\varphi_{+}+\varphi_{-}$ with $\varphi_{+}, \overline{\varphi}_{-}\in H^2$. If $\chi=P_{\alpha}(\overline{\varphi}_{-})$ and $\psi=P_{\beta}(\varphi_{+})$, then $\varphi-\overline{\chi}-\psi\in\overline{\alpha H^2}+\beta H^2$. By Theorem \ref{zero}, $A_{\varphi}^{\alpha, \beta}=A_{\overline{\chi}+\psi}^{\alpha, \beta}$.

Note that for $f\in K_{\alpha}$,
\begin{displaymath}
%\begin{split}
A_{k_0^{\beta}}^{\alpha, \beta}f=P_{\beta}\left(f-\overline{\beta(0)}\beta f \right )=P_{\beta}f=A_1^{\alpha, \beta} f.
%\end{split}
\end{displaymath}
Since $\overline{\alpha}f\perp H^2$ for $f\in K_{\alpha} $, we get
\begin{displaymath}
%\begin{split}
A_{\overline{k}_0^{\alpha}}^{\alpha, \beta}f=P_{\beta}\left (f-\alpha(0)\overline{\alpha}f\right )=P_{\beta}f=A_1^{\alpha, \beta} f.
%\end{split}
\end{displaymath}
Therefore, if $A_{\varphi}^{\alpha, \beta}=A_{\overline{\chi}+\psi}^{\alpha, \beta}$ with $\chi \in K_{\alpha},\psi\in K_{\beta}$ as above and $\chi_1=\chi-\overline{c}k_0^{\alpha}$, $\psi_1=\psi+c k_0^{\beta}$ for some constant $c\in \mathbb{C}$, then
\begin{displaymath}
A_{\overline{\chi}_1+\psi_1}^{\alpha, \beta}=A_{\overline{\chi}}^{\alpha, \beta}-cA_{1}^{\alpha, \beta}+A_{\psi}^{\alpha, \beta}+cA_{1}^{\alpha, \beta}=A_{\varphi}^{\alpha, \beta}.
\end{displaymath}
Moreover, if $A_{\varphi}^{\alpha, \beta}=A_{\overline{\chi}+\psi}^{\alpha, \beta}=A_{\overline{\chi}_1+\psi_1}^{\alpha, \beta}$ for any other $\chi_1 \in K_{\alpha},\psi_1\in K_{\beta}$, then, by Theorem \ref{zero}, there exist $h_1,h_2\in H^2$ such that
\begin{displaymath}
\overline{\chi}+\psi-\overline{\chi}_1-\psi_1=\overline{\alpha h_1}+\beta h_2.
\end{displaymath}
Hence
\begin{displaymath}
\psi-\psi_1=\beta h_2+\overline{\alpha h_1+\chi_1-\chi}
\end{displaymath} and
\begin{displaymath}
%\begin{split}
\psi-\psi_1=P_{\beta}(\psi-\psi_1)= P_{\beta}(\overline{\alpha h_1+\chi_1-\chi})=c_1P_{\beta}1=c_1k_0^{\beta}
%\end{split}
\end{displaymath}
for some constant $c_1$. Similarly,
\begin{displaymath}
\chi-\chi_1=\alpha h_1+\overline{\beta h_2+\psi_1-\psi}
\end{displaymath}
and
\begin{displaymath}
%\begin{split}
\chi-\chi_1=P_{\alpha}(\chi-\chi_1)=P_{\alpha}(\overline{\beta h_2+\psi_1-\psi})=c_2k_0^{\alpha}
%\end{split}
\end{displaymath}
for some constant $c_2$.

From this,
\begin{displaymath}
\begin{split}
0=A_{\overline{\chi}-\overline{\chi}_1+\psi-\psi_1}^{\alpha, \beta}&=\overline{c}_2A_{\overline{k}_0^{\alpha}}^{\alpha, \beta}+c_1A_{k_0^{\beta}}^{\alpha, \beta}\\&= (\overline{c}_2+c_1)A_1^{\alpha, \beta}=
(\overline{c}_2+c_1)P_{\beta|K_{\alpha}}.
\end{split}
\end{displaymath}
By Lemma \ref{le1}, $\overline{c}_2+c_1=0$. Putting $c=-c_1=\overline{c}_2$ we have $\psi_1=\psi+ck_0^{\beta}$
and $\chi_1=\chi-\overline{c}k_0^{\alpha}$.
\end{proof}

\section{Rank-one operators in $\mathscr{T}(\alpha,\beta)$}

Recall, that the model space $K_{\alpha}$ is equipped with a natural conjugation
(antilinear, isometric involution) $C_{\alpha}\colon
K_{\alpha}\rightarrow K_{\alpha}$, defined in terms of the boundary
values by
\begin{equation*}%\label{numerek3}
C_{\alpha}f(z)=\alpha(z)\overline{z}\overline{f(z)},\quad |z|=1
\end{equation*}
(see \cite[Subsection 2.3]{s}, for more details). %Actually, it can be
%verified that \eqref{numerek3} defines an antilinear and isometric
%involution on $L^2(\partial\mathbb{D})$, which maps $\alpha H^2$
%onto $\overline{H_0^2}$, $\overline{H_0^2}$ onto $\alpha H^2$, and
%preserves $K_{\alpha}$.
%For $f\in K_{\alpha}$ we will write $\widetilde{f}$ in place of
%$C_{\alpha}f$ when no confusion can arise.
A short calculation shows that the conjugate kernel
$\widetilde{k}_{w}^{\alpha}=C_{\alpha}k_{w}^{\alpha}$ is given by
$$\widetilde{k}_{w}^{\alpha}(z)=\frac{\alpha(z)-\alpha(w)}{z-w}.$$
If $\eta\in\partial\mathbb{D}$ and $k_{\eta}^{\alpha}\in K_{\alpha}$, then
$$\widetilde{k}_{\eta}^{\alpha}(z)=\frac{\alpha(z)-\alpha(\eta)}{z-\eta}=\alpha(\eta)\overline{\eta}k_{\eta}^{\alpha}(z).$$

We can now give some examples of rank-one asymmetric truncated Toeplitz operators (compare with \cite[Thm. 5.1]{s}).

\begin{prop}\label{rankone}
Let $ \alpha$, $\beta$ be two nonconstant inner functions.
\begin{itemize}
\item[(a)] For $w\in \mathbb{D}$, the operators $\widetilde{k}_w^{\beta}\otimes {k}_w^{\alpha}$ and
$k_w^{\beta}\otimes \widetilde{k}_w^{\alpha}$ belong to $\mathscr{T}(\alpha,\beta)$, $$\widetilde{k}_w^{\beta}\otimes {k}_w^{\alpha}=A^{\alpha, \beta}_{\frac{{\beta(z)}}{{z}-{w}}}\quad \mathrm{and}\quad k_w^{\beta}\otimes \widetilde{k}_w^{\alpha}=A^{\alpha, \beta}_{\frac{\overline{\alpha(z)}}{\overline{z}-\overline{w}}}.$$
\item[(b)] If both $\alpha$ and $\beta$ have an ADC at the point $\eta$ of $\partial\mathbb{D}$, then the operator $k_{\eta}^{\beta}\otimes {k}_{\eta}^{\alpha} $ belongs to $\mathscr{T}(\alpha,\beta)$, $$k_{\eta}^{\beta}\otimes {k}_{\eta}^{\alpha} =A^{\alpha, \beta}_{k_{\eta}^{\beta}+\overline{{k}}_{\eta}^{\alpha}-1}.$$
\end{itemize}
\end{prop}
\begin{proof}
$(\mathrm{a})$ Let $w\in \mathbb{D}$ and $f\in K_{\alpha}$. Since $\frac{f(z)-f(w)}{z-w}\in K_{\alpha}$ (\cite[Subsection 2.6]{s}), we have
\begin{displaymath}
\begin{split}
A_{\frac{{\beta(z)}}{{z}-{w}}}^{\alpha, \beta}f&=P_{\beta}\left(\frac{{\beta(z)}}{{z}-{w}}f(z)\right)\\
&=P_{\beta}\left(\beta(z)\frac{f(z)-f(w)}{z-w}+f(w)\frac{{\beta(z)-\beta(w)}}{{z}-{w}}+f(w)\frac{{\beta(w)}}{{z}-{w}}\right)\\
&= f(w)P_{\beta}\left(\frac{\beta(z)-\beta(w)}{z-w}\right)+f(w)\beta(w)P_{\beta}\left(\frac{\overline{z}}{1-w\overline{z}}\right)\\
&=f(w)\widetilde{k}_w^{\beta}=\langle f,k_w^{\alpha}\rangle\widetilde{k}_w^{\beta}=\widetilde{k}_w^{\beta}\otimes {k}_w^{\alpha}(f).
\end{split}
\end{displaymath}
Similarly,
\begin{displaymath}
\begin{split}
A_{\frac{\overline{\alpha(z)}}{\overline{z}-\overline{w}}}^{\alpha, \beta}f&=P_{\beta}\left(\frac{\overline{\alpha(z)}}{\overline{z}-\overline{w}}f(z)\right)=P_{\beta}\left(\overline{z}\frac{\overline{\alpha}(z)zf(z)}{\overline{z}-\overline{w}}\right)=P_{\beta}\left(\overline{z}\frac{\overline{C_{\alpha}f(z)}}{\overline{z}-\overline{w}}\right)\\&= P_{\beta}\left(\overline{z}\frac{\overline{C_{\alpha}f(z)}-\overline{C_{\alpha}f(w)}}{\overline{z}-\overline{w}}+\overline{z}\frac{\overline{C_{\alpha}f(w)}}{\overline{z}-\overline{w}}\right)=
\overline{C_{\alpha}f(w)}P_{\beta}(k_w)\\
&=\overline{C_{\alpha}f(w)}k_w^{\beta}=\overline{\langle C_{\alpha}f,k_w^{\alpha}\rangle}k_w^{\beta}=\langle f,\widetilde{k}_w^{\alpha}\rangle k_w^{\beta}=k_w^{\beta}\otimes \widetilde{k}_w^{\alpha}(f).
\end{split}
\end{displaymath}
\vspace{0.1cm}
$(\mathrm{b})$ Let $w\in \mathbb{D}$. Then
$$A_{k_w}^{\alpha, \beta}=A_{k_w^{\beta}}^{\alpha, \beta}\quad\mathrm{and}\quad A_{\overline{k}_w}^{\alpha, \beta}=A_{\overline{k}_w^{\alpha}}^{\alpha, \beta}.$$
Indeed,
\begin{displaymath}
%\begin{split}
A_{k_w^{\beta}}^{\alpha, \beta}f=P_{\beta}\left ((1-\overline{\beta(w)}\beta)k_wf\right )=
P_{\beta}\left(k_wf\right)=A_{k_w}^{\alpha, \beta}f,
%\end{split}
\end{displaymath}
for every $f\in K_{\alpha}$. From this,
\begin{displaymath}
%\begin{split}
A_{\overline{k}_w^{\alpha}}^{\alpha, \beta}= \left(A_{{k}_w^{\alpha}}^{\beta, \alpha} \right)^{*}=\left(A_{{k}_w}^{\beta, \alpha} \right)^{*}=A_{\overline{k}_w}^{\alpha, \beta}.
%\end{split}
\end{displaymath}
Since for $w\neq 0$ and $|z|=1$,
\begin{displaymath}
\begin{split}
\frac{\beta(z)}{z-w}&=\frac{\beta(z)-\beta(w)}{z-w}+\frac{\beta(w)}{z-w}\\&=
\widetilde{k}_w^{\beta}(z)+\frac{\beta(w)}{w}\cdot\frac{w\overline{z}}{1-w\overline{z}}=
\widetilde{k}_w^{\beta}(z)+\frac{\beta(w)}{w}\left (\overline{k}_w(z)-1\right ),
\end{split}
\end{displaymath}
we have, by part $(\mathrm{a})$,
\begin{displaymath}
%\begin{split}
\widetilde{k}_w^{\beta}\otimes {k}_w^{\alpha}=A_{\frac{{\beta(z)}}{{z}-{w}}}^{\alpha, \beta}=A_{\widetilde{k}^{\beta}_w+\frac{\beta(w)}{w}\left(\overline{k}_w-1\right)}^{\alpha, \beta}=A_{\widetilde{k}^{\beta}_w+\frac{\beta(w)}{w}\left(\overline{k}^{\alpha}_w-k_0^{\beta}\right)}^{\alpha, \beta}.
%\end{split}
\end{displaymath}
%Similarly,
%\begin{displaymath}
%k_w^{\beta}\otimes \widetilde{k}_w^{\alpha}=A^{\alpha, \beta}_{\overline{\widetilde{k}}^{\alpha}_w+\frac{\overline{\alpha(w)}}{w}\left(k_w^{\beta}-k_0^{\beta}\right)}.
%\end{displaymath}
If $\alpha$ and $\beta$ have an ${ADC}$  at $\eta\in \partial\mathbb{D}$, then $k_w^{\alpha}$ and $k_w^{\beta}$ converge in norm to $k_{\eta}^{\alpha}$ and $k_{\eta}^{\beta}$, respectively, as $w$ tends to $\eta$ nontangentially. Hence
$\widetilde{k}_w^{\beta}\otimes {k}_w^{\alpha}$ tends to $\widetilde{k}_{\eta}^{\beta}\otimes {k}_{\eta}^{\alpha}$ in the operator norm. On the other hand,
\begin{displaymath}
\widetilde{k}^{\beta}_w+\frac{\beta(w)}{w}\left(\overline{k}^{\alpha}_w-k_0^{\beta}\right)\longrightarrow \widetilde{k}^{\beta}_{\eta}+\frac{\beta(\eta)}{\eta}\left(\overline{k}^{\alpha}_{\eta}-k_0^{\beta}\right)\quad\mathrm{in}\quad L^2(\partial\mathbb{D}),
\end{displaymath}
which implies that
\begin{displaymath}
A^{\alpha, \beta}_{\widetilde{k}^{\beta}_w+\frac{\beta(w)}{w}\left(\overline{k}^{\alpha}_w-k_0^{\beta}\right)}f\longrightarrow A^{\alpha, \beta}_{\widetilde{k}^{\beta}_{\eta}+\frac{\beta(\eta)}{\eta}\left(\overline{k}^{\alpha}_{\eta}-k_0^{\beta}\right)}f\quad\mathrm{in}\quad H^2,
\end{displaymath}
for every $f\in K_{\alpha}^{\infty}$. Therefore,
\begin{displaymath}
\widetilde{k}_{\eta}^{\beta}\otimes {k}_{\eta}^{\alpha}=A_{\widetilde{k}_{\eta}^{\beta}+\frac{\beta(\eta)}{\eta}\left(\overline{k}_{\eta}^{\alpha}-k_0^{\beta}\right)}^{\alpha, \beta}.
\end{displaymath}
But
\begin{displaymath}
\widetilde{k}_{\eta}^{\beta}(z)=\frac{\beta(z)-\beta(\eta)}{z-\eta}=\frac{\beta(\eta)}{\eta} k_{\eta}^{\beta}(z),
\end{displaymath}
and
\begin{displaymath}
\begin{split}
k_{\eta}^{\beta}\otimes k_{\eta}^{\alpha}&= \frac{\eta}{\beta(\eta)}\widetilde{k}_{\eta}^{\beta}\otimes k_{\eta}^{\alpha}= \frac{\eta}{\beta(\eta)}A_{\frac{\beta(\eta)}{\eta}\left(k_{\eta}^{\beta}+\overline{k}_{\eta}^{\alpha}-k_0^{\beta}\right)}^{\alpha, \beta}\\&=
A_{k_{\eta}^{\beta}+\overline{k}_{\eta}^{\alpha}-k_0^{\beta}}^{\alpha, \beta}=A_{k_{\eta}^{\beta}+\overline{k}_{\eta}^{\alpha}-1}^{\alpha, \beta}.
\end{split}
\end{displaymath}
\end{proof}

%It was proved in \cite[Thm. 5.1]{s} that the only rank-one operators in $\mathcal{T}(\alpha)$ are the nonzero scalar multiples of the operators $\widetilde{k}_w^{\alpha}\otimes {k}_w^{\alpha}$, $k_w^{\alpha}\otimes \widetilde{k}_w^{\alpha}$ and $k_{\eta}^{\alpha}\otimes {k}_{\eta}^{\alpha}$. It is still an open question whether the scalar multiples of the operators from Proposition \ref{rankone} are the only rank-one operators in $\mathcal{T}(\alpha,\beta)$ for arbitrary inner functions $\alpha$ and $\beta$.

\end{document}